\newcommand{\rank}{\operatorname{rank}}
\newcommand{\spec}{\operatorname{Spec}}
\newcommand{\dis}{\displaystyle}
\def\R{{\mathbb R}}
\def\N{{\mathbb N}}
\def\C{{\mathbb C}}
\def\H{{\mathcal H}}
\def\E{{\mathcal E}}
\def\Q{{\mathbb Q}}
\def\Z{{\mathbb Z}}
\def\e{{\mathbf e}}
\def\x{{\mathbf x}}
\def\y{{\mathbf y}}
\def\h{{\mathbf h}}
\theoremstyle{plain}
 \newtheorem{thm}{Theorem}[section]
 \newtheorem{corollary}[thm]{Corollary}
 \newtheorem{prop}[thm]{Proposition}
 \theoremstyle{definition}
 \newtheorem{defn}[thm]{Definition}
 \theoremstyle{remark}
 \newtheorem{rem}[thm]{Remark}
 \newtheorem{eg}[thm]{Example}
 \numberwithin{equation}{section}
\begin{document}

\title[Hilbert basis of the Lipman semigroup]{Hilbert basis of the Lipman semigroup}


\author[Mesut \c{S}ah\.{i}n]{Mesut \c{S}ah\.{i}n}
\address{Department of Mathematics, \c{C}ank\i r\i ~Karatek\.{i}n University, 18100, \c{C}ank\i r\i, ~ Turkey}
\curraddr{} \email{mesutsahin@gmail.com}
\thanks{}

\subjclass[2000]{Primary: 14E15, 14M25; Secondary: 13A50, 05E40}

\keywords{normal surface singularity, toric variety, Lipman semigroup, Hilbert basis}

\date{\today}


\begin{abstract}
In this work, we give a new method to compute the Hilbert basis of the semigroup of certain positive divisors supported on the exceptional divisor of a normal surface singularity. Our approach is purely combinatorial which permits to avoid the long calculation of the invariants of the ring as it is presented in the work of Alt\i nok and Tosun.
\end{abstract}

\maketitle

\section{Introduction}

The exceptional divisor of a resolution of a singularity of a normal surface is a connected curve. The set of positive divisors supported on this exceptional divisor satisfying some negativity condition forms a semigroup, called the semigroup of Lipman in reference to his work \cite{lipman}. The unique smallest element of this semigroup characterizes the class of the singularity; for example, if the geometric genus of the smallest element is zero then the singularity is called rational \cite{artin}. When the singularity is rational, the elements of the semigroup of Lipman are in one-to-one correspondence with the functions in the local ring at the singularity. These elements are important to understand algebraic and topological structure of the
corresponding singularity, see \cite{tolga,nemethi1, nemethi2}.

The smallest element of the semigroup of Lipman is calculated by the Laufer algorithm (see \cite[4.1]{laufer}) and all the other elements are computed by the algorithms given in \cite{pinkham,tosun}.  The natural question of determining an explicit finite generating set for the semigroup is answered in \cite{meral-selma}. The authors use the tools from toric geometry to compute all the generators by means of the generators of certain ring of invariants. Their method is effective but it is difficult to follow for an exceptional divisor with many components.

Here we present an easier combinatorial method to obtain the set of generators of the semigroup of Lipman. More significantly, we describe another semigroup associated to an exceptional divisor whose Hilbert basis, which can be computed directly from the intersection matrix of the exceptional divisor, gives exactly the generators of the Lipman semigroup and the corresponding ring of invariants at the same time. The latter is important for a deeper study of properties of the associated toric variety, such as being a set-theoretic complete intersection \cite{bmt} or having a nice Castelnuovo-Mumford regularity \cite{regularity}.

\section{Preliminaries}
In this section, we recall some terminology and results which will be used later without any reference.
Let $Y$ be a normal surface with
an isolated singularity at $0$ and $(X,E)\rightarrow (Y,0)$ be a resolution of singularities with an exceptional curve $E$ over $0$. Let $E_1,\dots,E_n$ be the
irreducible components of $E$. The set of divisors
supported on $E$ forms a lattice defined by
$$M:=\{ m_1 E_1+\cdots+m_n E_n \,|\, m_i \in \Z \}.$$
There is an additive subsemigroup of $M$  which is referred to as the \textit{Lipman semigroup} and is defined by
$$\E:=\left\{ D\in M \,|\, D \cdot E_i \leq 0, \:\mbox{for any} \:i=1,\dots,n \right\}.$$

It follows that if $m_1 E_1+\cdots+m_n E_n \in \E \backslash \{0\}$ then $m_i >0$, for all $i=1,\dots,n$, see \cite{artin}. By definition, $D\in \E$ if and only if $D \cdot E_i=-d_i$ for some $d_i \in \N$ and for all $i=1,\dots,n$. Denote by $M(E)$ the intersection matrix of the exceptional divisor $E$, that is, a matrix with integral entries defined by the intersection multiplicities $E_i \cdot E_j$. It is known that $M(E)$ is negative definite.

Given $\dis D=m_1 E_1+\cdots+m_n E_n \in M$, with $m_i > 0$. The following equivalence determines the elements of $\E$
\begin{equation}\label{eq1} \dis D \cdot E_i=-d_i \Leftrightarrow M(E) [\, m_1 \; \cdots \; m_n \,]^T=-[\, d_1 \; \cdots \; d_n \,]^T.
 \end{equation}

If $\e_i=[\, 0 \; \cdots \;1 \; \cdots \; 0 \,]^T$ is the standard basis element of the space of column matrices of size $n$, then every column matrix $-[\, d_1 \; \cdots \; d_n \,]^T$, with all $d_i \geq 0$, is spanned by $-\e_1,\dots,-\e_n$. Hence, it follows that the rational cone over $\E$ is generated by the $F_i$ which is defined to be the (rational) solution of the matrix equation above corresponding to $-\e_i$ for each $i$. Therefore, we can write $F_i$ as follows:
$$F_i=\sum_{j=1}^{n} \frac{a_{ij}}{b_{ij}} E_j,$$ where $a_{ij}$ and $b_{ij}$ are relatively prime integers. Now, let $g_i$ be the least common factor of $b_{i1},\dots,b_{in}$ so that $g_iF_i$ is the smallest multiple of $F_i$ that belongs to $\E$. Denote by $M'$ the lattice generated by $F_1,\dots,F_n$ and let $N$, $N'$ be the corresponding dual lattices of $M$, $M'$ respectively. Then, $N'$ is a sublattice of $N$ of finite index, since $M$ is a sublattice of $M'$.

Denote by $\check{\sigma}$ the cone in $M_{\R}:=M\otimes_{\Z}\R$ spanned by the semigroup $\E$. The semigroup $\check{\sigma} \cap M \supseteq \E$ is called the saturation of $\E$ and the semigroup $\E$ itself is called saturated (or normal) if $\check{\sigma} \cap M \subseteq \E$ as well.

\begin{prop} $\E$ is a pointed, saturated semigroup which is also simplicial and finitely generated.
\end{prop}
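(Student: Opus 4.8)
The plan is to deduce all four properties from the single structural fact that the intersection matrix $M(E)$, being negative definite, is invertible, so that the pairing $D\mapsto (D\cdot E_1,\dots,D\cdot E_n)$ is a linear isomorphism $M_\R\to\R^n$ represented by $M(E)$ in the basis $E_1,\dots,E_n$. Under this identification $\E$ is exactly the set of lattice points $D\in M$ whose image lies in the closed negative orthant, and the excerpt already records that the rational cone $\check{\sigma}$ spanned by $\E$ is generated by the vectors $F_1,\dots,F_n$ determined by $M(E)F_i=-\e_i$, equivalently by the normalization $F_i\cdot E_j=-\delta_{ij}$.

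First I would prove that $\check{\sigma}$ is simplicial. Since $F_i=M(E)^{-1}(-\e_i)$ and $M(E)^{-1}$ is a linear isomorphism, the vectors $F_1,\dots,F_n$ are the images of the linearly independent $-\e_1,\dots,-\e_n$ and hence are themselves linearly independent; as $\check{\sigma}$ is generated by exactly these $n$ vectors inside the $n$-dimensional space $M_\R$, it is a (full-dimensional) simplicial cone. Pointedness then comes essentially for free, because a cone generated by linearly independent vectors is strongly convex; alternatively I would invoke the fact, recorded before the statement, that every nonzero $D\in\E$ has all coordinates strictly positive, which immediately forces $\E\cap(-\E)=\{0\}$.

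Next I would establish saturation, i.e. $\check{\sigma}\cap M\subseteq\E$. Given $D\in\check{\sigma}\cap M$, write $D=\sum_i\lambda_iF_i$ with all $\lambda_i\ge 0$; then $D\cdot E_j=\sum_i\lambda_i(F_i\cdot E_j)=-\lambda_j\le 0$ for every $j$, so $D$ satisfies the defining inequalities of $\E$, and since $D\in M$ we conclude $D\in\E$. Together with the reverse inclusion $\E\subseteq\check{\sigma}\cap M$ built into the definition, this yields $\E=\check{\sigma}\cap M$, which is exactly normality. Finite generation is then immediate from Gordan's lemma: $\check{\sigma}$ is a rational polyhedral cone, since its generators $F_i$ have rational coordinates, and $M$ is a lattice, so $\check{\sigma}\cap M=\E$ is a finitely generated semigroup.

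Since this is a preliminary proposition, I do not expect a serious obstacle; the one step that actually carries content is the saturation argument, as it is the only place where I must use the precise normalization $F_i\cdot E_j=-\delta_{ij}$ rather than merely the fact that the $F_i$ span $\check{\sigma}$. A secondary point I would verify carefully is that the cone spanned by $\E$ genuinely coincides with $\operatorname{cone}(F_1,\dots,F_n)$, which rests on the multiples $g_iF_i$ lying in $\E$ as noted just before the statement; everything else is a formal consequence of the invertibility of $M(E)$ and the standard toric dictionary.
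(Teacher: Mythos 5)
Your proof is correct, and at the top level it follows the same strategy as the paper---everything is deduced from the cone $\check{\sigma}$ generated by $F_1,\dots,F_n$ together with the negative definiteness of $M(E)$---but the individual steps run on different mechanisms and in a different order, so a comparison is worthwhile. The paper proves pointedness and saturation first and only then obtains simpliciality, as a consequence of the identity $\E=\check{\sigma}\cap M$; you get simpliciality immediately from the linear independence of the $F_i$ (invertibility of $M(E)$), independently of saturation, and let pointedness ride on it. For saturation the two arguments genuinely differ: the paper writes $D\in\check{\sigma}\cap M$ as a positive rational multiple $mD'$ of some $D'\in\E$ and scales the homogeneous inequalities $D'\cdot E_i\le 0$, whereas you expand $D=\sum_i\lambda_iF_i$ and use the dual-basis relation $F_i\cdot E_j=-\delta_{ij}$ to read off $\lambda_j=-D\cdot E_j\ge 0$; this is indeed the one place where the normalization of the $F_i$, rather than the mere fact that they span $\check{\sigma}$, is needed, exactly as you flag. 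A small dividend of your route: the paper's pointedness argument, which derives $D\cdot E_i=0$ for all $i$ from $D\in\E\cap(-\E)$, silently needs the nondegeneracy of $M(E)$ to conclude $D=0$, while both of your arguments (strong convexity of a cone on linearly independent generators, or strict positivity of the coordinates of nonzero elements of $\E$) close that gap explicitly. Finally, your appeal to Gordan's lemma and the paper's citation of \cite[Lemma 13.1]{sturmfels} carry the same content for finite generation; the cited lemma additionally yields uniqueness of the minimal generating set, which the paper wants later in order to define the Hilbert basis, but that is not part of the statement being proved here.
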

\begin{proof} If $D \in \E$, then $D \cdot E_i \leq 0$ which forces that $-D \cdot E_i \geq 0$. This means that $D\in \E \cap (-\E)$ if and only if $D=0$, which proves that $\E$ is pointed.

Now, take $D\in \check{\sigma} \cap M$, i.e. $D=mD'$, for some $D'\in \E$ and $m>0$. Since $D'\in \E$, we have $D' \cdot E_i \leq 0$ which yields immediately that $D \cdot E_i=m D' \cdot E_i \leq 0 $. Therefore, $D$ must belong to $\E$ which reveals that $\E$ is saturated.

Since $\E$ is saturated it follows that $\E=\check{\sigma} \cap M$ and thus $\check{\sigma}$ is generated by $n=\dim \check{\sigma}=\rank M$ linearly independent elements $F_1,\dots,F_n$ over $\Q^+$, which means that $\check{\sigma}$ is a \textit{maximal} and \textit{simplicial} strongly convex rational polyhedral cone. This shows that $\E$ is simplicial.

That $\E$ has a unique finite minimal generating set $\H_{\E}$ over $\N$ follows directly from \cite[Lemma 13.1]{sturmfels}.
\end{proof}

\begin{defn} The unique minimal generating set $\H_{\E}$ of $\E$ over $\N$ is called the \textit{Hilbert basis} of $\E$.
\end{defn}
Since $\E$ is saturated, we can associate a normal toric variety $V_{\E}:=\spec \C[\E]$ to $\E$, see \cite{fulton} for details. It turns out that the coordinate ring $\C[\E]$ of this variety is nothing but the ring of invariants of $\C[M']$ under the natural action of $N/N'$, see \cite[Proposition 3.4]{meral-selma}.

\begin{rem} $V_{\E}$ is isomorphic to the geometric quotient $\C^k / G$ in the language of the Geometric Invariant Theory, since $G=N/N'$ is a finite group and $\E$ is simplicial. Hence, $V_{\E}$ has only quotient singularities.
\end{rem}

 \section{Main Results}
Recall that the unique minimal generating set $\H_{S}$ of a pointed, saturated semigroup $S$ is called the \textit{Hilbert basis} of $S$, see \cite{sturmfels}. We first associate to $\E$ the obvious subsemigroup of $\N^n$;
 $$\dis S_1:=\{(m_1,\dots,m_n) \in \N^n \,|\,  m_1 E_1+\cdots+m_n E_n \in \E \}.$$

\begin{prop}\label{S1} $\phi_1: \E \rightarrow S_1$ is an isomorphism, where $\phi_1(m_1 E_1+\cdots+m_n E_n)=(m_1,\dots,m_n)$. \hfill $\Box$
\end{prop}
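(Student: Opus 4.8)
The plan is to verify directly that $\phi_1$ is a bijective homomorphism of semigroups, since the statement is essentially an identification of $\E$ with its image under the coordinate map attached to the basis $E_1,\dots,E_n$ of the lattice $M$. Accordingly, I would organize the argument into three routine checks: well-definedness into $S_1$, compatibility with the additive structure, and bijectivity.

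First I would check that $\phi_1$ is well-defined as a map into $S_1\subseteq \N^n$. The only point requiring attention is that the image lands in $\N^n$ rather than in $\Z^n$. For $D=m_1 E_1+\cdots+m_n E_n\in\E$, I would invoke the positivity fact recalled in the Preliminaries (see \cite{artin}): if $D\neq 0$ then $m_i>0$ for every $i$, while $D=0$ maps to the origin. Hence $(m_1,\dots,m_n)\in\N^n$, and since $D\in\E$ this tuple satisfies the defining condition of $S_1$ by construction, so indeed $\phi_1(D)\in S_1$.

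Next I would verify that $\phi_1$ is a semigroup homomorphism, which is immediate from the additivity of coordinates: for $D=\sum_{i} m_i E_i$ and $D'=\sum_{i} m_i' E_i$ in $\E$ we have $D+D'=\sum_{i}(m_i+m_i')E_i$, whence $\phi_1(D+D')=\phi_1(D)+\phi_1(D')$. Bijectivity then splits into two parts: injectivity holds because $E_1,\dots,E_n$ are linearly independent generators of $M$, so the coordinate tuple determines $D$ uniquely; surjectivity is built into the very definition of $S_1$, since for $(m_1,\dots,m_n)\in S_1$ the divisor $D=\sum_i m_i E_i$ lies in $\E$ and satisfies $\phi_1(D)=(m_1,\dots,m_n)$.

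I do not anticipate any genuine obstacle here, as the proposition amounts to a change of coordinates. The only step that is more than purely formal is the well-definedness into $\N^n$, which rests entirely on the positivity of the coefficients of a nonzero element of $\E$; everything else follows from the fact that $\{E_1,\dots,E_n\}$ is a $\Z$-basis of $M$ realizing $\phi_1$ as the corresponding coordinate isomorphism, restricted to the subsemigroup $\E$.
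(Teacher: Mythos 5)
Your proof is correct, and it coincides with what the paper intends: the paper states this proposition with no proof at all (the terminal $\Box$ signals it is regarded as immediate), and your three checks---well-definedness into $\N^n$ via the positivity of coefficients of nonzero elements of $\E$ recalled from \cite{artin}, additivity of coordinates, and bijectivity from $E_1,\dots,E_n$ being a $\Z$-basis of $M$ together with the definition of $S_1$---are precisely the routine details being left to the reader.
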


Similarly, we can associate another subsemigroup $S_2$ of $\N^n$ with the semigroup $\E$ as follows:
 $$\dis S_2:=\{(d_1,\dots,d_n) \in \N^n \,|\, d_i=-(D\cdot E_i), \:\mbox{for some} \: D\in \E \;\:\mbox{and for all} \: i=1,\dots,n\}.$$

\begin{prop}\label{S2} $S_2$ and $\E$ are isomorphic as semigroups. Moreover, the Hilbert basis of $S_2$ determines the parametrization of the toric variety $V_{\E}$.
\end{prop}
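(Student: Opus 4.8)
The plan is to realise the passage from $\E$ to $S_2$ as multiplication by the intersection matrix and then to read off the parametrization from the basis $F_1,\dots,F_n$ of $M'$. First I would introduce the map $\phi_2:\E\rightarrow S_2$ sending $D=m_1E_1+\cdots+m_nE_n$ to the tuple $(-(D\cdot E_1),\dots,-(D\cdot E_n))$; by \eqref{eq1} this is precisely $-M(E)[\,m_1\;\cdots\;m_n\,]^T$. That $\phi_2$ is a semigroup homomorphism follows from the bilinearity of the intersection pairing, equivalently from the linearity of the map $\mathbf{m}\mapsto -M(E)\mathbf{m}$, and surjectivity onto $S_2$ holds by the very definition of $S_2$. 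For injectivity I would use that $M(E)$ is negative definite, hence invertible, so $-M(E)$ is an injective linear map and $\phi_2(D)=\phi_2(D')$ forces $D=D'$. Since a bijective homomorphism of semigroups is automatically an isomorphism, $\phi_2$ is an isomorphism; composing with $\phi_1$ from Proposition \ref{S1} one also gets $S_1\cong S_2$.

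For the second assertion I would use the description $\C[\E]=\C[M']^{G}$ together with the basis $F_1,\dots,F_n$ of $M'$. The key computation is that, for $D\in\E$ with $\phi_2(D)=(d_1,\dots,d_n)$, one has $[\,m_1\;\cdots\;m_n\,]^T=-M(E)^{-1}[\,d_1\;\cdots\;d_n\,]^T=\sum_{i=1}^{n} d_i F_i$, because each $F_i$ is exactly the $i$-th column of $-M(E)^{-1}$. Thus, writing $t_i:=\chi^{F_i}$ for the coordinate functions on $\C[M']=\C[t_1^{\pm1},\dots,t_n^{\pm1}]$, the monomial $\chi^{D}$ equals $t_1^{d_1}\cdots t_n^{d_n}$. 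In other words, the isomorphism $\E\cong S_2$ records precisely the exponent vectors of the monomials $\chi^{D}$ in the variables $t_i$. Consequently the generators $\chi^{H}$ of $\C[\E]$ attached to the Hilbert basis $\H_{\E}$ become the monomials $t^{\mathbf d}$ with $\mathbf d$ ranging over the Hilbert basis $\H_{S_2}$ of $S_2$, and the map $(t_1,\dots,t_n)\mapsto(t^{\mathbf d})_{\mathbf d\in\H_{S_2}}$ is the desired parametrization of $V_{\E}$.

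I expect the isomorphism part to be routine linear algebra once negative definiteness is invoked for invertibility, so the genuine content lies in the second step. The main obstacle there is to match the purely combinatorial object, the Hilbert basis of $S_2\subset\N^n$, with the minimal monomial generators of the invariant ring $\C[\E]=\C[M']^{G}$; this rests on the identity $D=\sum_i d_iF_i$ and on the cited fact from \cite{meral-selma} that $\C[\E]$ is exactly this ring of invariants. Care is needed to ensure that minimality of the Hilbert basis transfers correctly under $\phi_2$, so that the resulting list of monomials $t^{\mathbf d}$ contains no redundancy and genuinely furnishes a minimal parametrization.
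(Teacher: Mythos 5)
Your first part is correct and is essentially the paper's own argument: the same map $\phi_2(D)=(-D\cdot E_1,\dots,-D\cdot E_n)$, the homomorphism property from bilinearity of the intersection pairing, and injectivity from invertibility of the negative definite matrix $M(E)$; your observation that surjectivity is immediate from the definition of $S_2$ is in fact cleaner than the paper's argument, which re-solves the linear system. Your identity $D=\sum_i d_iF_i$, equivalently $\chi^D=t_1^{d_1}\cdots t_n^{d_n}$ with $t_i=\chi^{F_i}$, is also a correct and attractive way to see why the exponent vectors in $S_2$ are the right data for the affine embedding of $V_{\E}$; and your worry about minimality transferring is unfounded, since a semigroup isomorphism carries the Hilbert basis of $\E$ bijectively onto that of $S_2$.

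There is, however, a genuine gap in the second part. What you prove is that the monomials $t^{\mathbf{d}}$, $\mathbf{d}\in\H_{S_2}$, minimally generate $\C[\E]\cong\C[S_2]$; this gives a closed embedding $V_{\E}\hookrightarrow\C^k$ and a monomial map $\C^n\to\C^k$ whose image is \emph{dense} in $V_{\E}$, but nothing more. The word ``parametrization'' here, both in the proposition and in the paper's concluding example, means that the toric set --- the actual image of $(u_1,\dots,u_n)\mapsto\bigl(u^{\mathbf{d}}\bigr)_{\mathbf{d}\in\H_{S_2}}$, zero values of the $u_i$ included --- equals $V_{\E}$ set-theoretically. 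The image of a monomial map is in general only a constructible dense subset and need not be closed: for instance $(u_1,u_2)\mapsto(u_1,u_1u_2)$ has image $\{(a,b)\,|\,a\neq 0\}\cup\{(0,0)\}$, whose closure is all of $\C^2$. The paper closes exactly this gap with two steps you omit: it notes that $\phi_2(g_iF_i)=g_i\e_i$ is the smallest element of $S_2$ on the $i$-th extreme ray, hence $g_i\e_i\in\H_{S_2}$ for every $i$, and it then invokes \cite[Corollary 2]{katsabekis-thoma} --- whose hypothesis is met precisely because the exponent set contains a positive multiple of each $\e_i$ --- to conclude that the toric set $\Gamma(S_2)$ coincides with $V_{S_2}$ and not merely has it as its closure. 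Your argument needs this (or an equivalent surjectivity proof) before you may call the monomial map a parametrization of $V_{\E}$.
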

\begin{proof} Define $\phi_2: \E \rightarrow S_2$ by $\phi_2(D)=(-D\cdot E_1,\dots,-D\cdot E_n)$, for each $D \in \E$. This defines clearly a homomorphism between the semigroups, since we have $$(D+D')\cdot E_i=D\cdot E_i+D'\cdot E_i, \quad \mbox{for any}\quad i=1,\dots,n.$$

Surjectivity follows from the Equation $1$ together with $M(E)$ being invertible over the rationals. Indeed, for a given $(d_1,\dots,d_n)\in S_2$ there are non-negative rational numbers $m'_i$ such that $\dis [\, m'_1 \; \cdots \; m'_n \,]^T=-(M(E))^{-1}[\, d_1 \; \cdots \; d_n \,]^T$. Multiplying $m'_i$ by the least common factor of the positive integers in the denominators of $m'_i$, we get non-negative integers $m_i$ such that $\phi_2(D)=(d_1,\dots,d_n)$, where $D=m_1 E_1+\cdots+m_n E_n \in \E$. The injectivity follows similarly.

We prove the second part now. Since $\C[\E]$ and $\C[S_2]$ are isomorphic from the first part, $V_{S_2}$ is an embedding of $V_{\E}=\spec \C[\E]$ in some affine space. It is known that $\C[S_2]$ is generated minimally by the monomials $u_1^{d_1}\cdots u_n^{d_n}$ which is determined by $(d_1,\dots,d_n) \in \H_{S_2}$. Therefore we need to determine the elements of $\H_{S_2}$ more precisely. Since $S_2$ is a subsemigroup of $\N^n$ and $\phi_2(g_iF_i)=g_i \e_i$ is the smallest element of $S_2$ on the $i$-th ray of the cone $\phi_2(\check{\sigma})$, it follows that $H_{S_2}$ contains $g_i \e_i$, for each $i=1,\dots,n$. If we denote by $\h_j=h_{j1}\e_1+\cdots+h_{jn}\e_n$ the other elements of the Hilbert basis of $S_2$, then it follows from \cite[Corollary 2]{katsabekis-thoma} that the toric variety $V_{S_2}$ is parametrized by the toric set
$$\Gamma(S_2)=\{(u_1^{g_1},\dots,u_n^{g_n},u_1^{h_{11}}\cdots u_n^{h_{1n}},\dots,u_1^{h_{k1}}\cdots u_n^{h_{kn}})\;|\;u_1,\dots,u_n \in \C\}.  $$
\end{proof}

In order to state our main result, let $A=[M(E)|I_n]$ be the $n \times 2n$ integer matrix obtained by joining the intersection matrix $M(E)$ of the exceptional divisor $E$ and the identity matrix of size $n \times n$. Then, we define the last semigroup as $$\dis S=\{(v_1,\dots,v_{2n}) \in \N^{2n} \,\big|\, A\cdot[v_1\cdots v_{2n}]^T=0\}.$$
Here is the nice relation between the three semigroups defined so far.
\begin{thm}\label{main} $\dis S = S_1 \times_{\mathcal{E}} S_2 := \{ (\x,\y) \in S_1 \times S_2 \mid \phi_1^{-1}(\x) = \phi_2^{-1}(\y) \}$
where $\phi_i^{-1} : S_i \to \mathcal{E}$ are the isomorphisms introduced in Propositions $2$ and $3$.
\end{thm}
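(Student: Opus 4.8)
The plan is to prove the set equality $S = S_1\times_{\E}S_2$ by a direct verification of both inclusions, the whole argument resting on the observation that the linear system defining $S$ is nothing but the matrix form \eqref{eq1} of the intersection condition. First I would split a vector $v=(v_1,\dots,v_{2n})\in\N^{2n}$ as $v=(\x,\y)$ with $\x=(v_1,\dots,v_n)$ and $\y=(v_{n+1},\dots,v_{2n})$, and record that, since $A=[M(E)\mid I_n]$, the condition $A\cdot v^T=0$ reads literally
\[
M(E)\,\x^T=-\y^T,\qquad \x,\y\in\N^n.
\]
This is exactly the equation in \eqref{eq1}, with $\x$ in the role of the coefficient vector $[\,m_1\;\cdots\;m_n\,]^T$ and $\y$ in the role of $[\,d_1\;\cdots\;d_n\,]^T$.

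For the inclusion $S\subseteq S_1\times_{\E}S_2$, I would take $(\x,\y)\in S$ and set $D:=x_1E_1+\cdots+x_nE_n$. The displayed equation gives $D\cdot E_i=(M(E)\x^T)_i=-y_i\leq 0$ for every $i$, because $y_i\in\N$; hence $D\in\E$, so $\x\in S_1$ with $\phi_1^{-1}(\x)=D$. Reading the same equation as $\phi_2(D)=\y$ shows $\y\in S_2$, and since $M(E)$ is invertible (being negative definite) the preimage $\phi_2^{-1}(\y)$ is unique and equals $D$. Therefore $\phi_1^{-1}(\x)=D=\phi_2^{-1}(\y)$, and $(\x,\y)$ lies in the fibre product.

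For the reverse inclusion I would start from $(\x,\y)\in S_1\times_{\E}S_2$ with common value $D:=\phi_1^{-1}(\x)=\phi_2^{-1}(\y)\in\E$. Unwinding $\phi_1^{-1}(\x)=D$ identifies $\x$ with the coefficient vector of $D$, while $\phi_2^{-1}(\y)=D$ says $y_i=-(D\cdot E_i)$ for all $i$; feeding both into \eqref{eq1} yields precisely $M(E)\x^T=-\y^T$, so $(\x,\y)\in S$. The only step that is not pure bookkeeping is the appeal to the invertibility of $M(E)$: this is what makes $\phi_2$ a genuine isomorphism, so that $\phi_2^{-1}(\y)$ is well defined and unique, and hence what turns the matching condition $\phi_1^{-1}(\x)=\phi_2^{-1}(\y)$ into a single linear identity rather than merely one inclusion of divisors. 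Once this is in place both inclusions follow at once, and I expect no further obstacle.
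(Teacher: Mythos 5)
Your proof is correct and takes essentially the same route as the paper: both arguments rest on the single observation that $A\cdot[v_1\cdots v_{2n}]^T=0$ unpacks, via the block structure $A=[M(E)\,|\,I_n]$, to $M(E)\,\x^T=-\y^T$, which by equation \eqref{eq1} says exactly that $D=x_1E_1+\cdots+x_nE_n\in\E$ and $D\cdot E_i=-y_i$ for all $i$. The paper writes this as one chain of biconditionals while you split it into two inclusions (and make explicit the appeal to invertibility of $M(E)$ for the uniqueness of $\phi_2^{-1}(\y)$, which the paper delegates to Proposition \ref{S2}), but the mathematical content is identical.
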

\begin{proof} The following observations can be seen immediately.
\begin{eqnarray*} (v_1, \dots, v_{2n})\in S &\Leftrightarrow& A \cdot [v_1\cdots v_{2n}]^T=0 \Leftrightarrow M(E)\cdot [v_1\cdots v_{n}]^T=-[v_{n+1}\cdots v_{2n}]^T\\
&\Leftrightarrow& D=v_1E_1+\cdots+v_nE_n \in \E \quad \mbox{and}\quad D\cdot E_i=-v_{n+i}, \;\mbox{for any} \;i=1,\dots,n\\
&\Leftrightarrow& (v_1,\dots, v_{n},v_{n+1},\dots, v_{2n}) \in S_1 \times_{\mathcal{E}} S_2.
\end{eqnarray*}
Therefore, the proof is complete.
\end{proof}
The Hilbert basis of this last semigroup is easy to find and gives important information about the others as we see now.

\begin{corollary} Hilbert basis of $S$ gives the generators of the Lipman semigroup and the parametrization of the corresponding toric variety at the same time.
\end{corollary}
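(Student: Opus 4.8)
The plan is to deduce everything from the fiber-product description $S = S_1 \times_{\E} S_2$ established in Theorem~\ref{main}. First I would record that, because $\phi_1^{-1}$ and $\phi_2^{-1}$ are the semigroup isomorphisms of Propositions~\ref{S1} and~\ref{S2}, an element $(\x,\y)\in S$ is completely determined by either of its two halves: once $\x\in S_1$ is known, the divisor $D=\phi_1^{-1}(\x)\in\E$ is fixed and hence so is $\y=\phi_2(D)$, and symmetrically. Consequently the two coordinate projections $\pi_1\colon S\to S_1$, $(\x,\y)\mapsto\x$, and $\pi_2\colon S\to S_2$, $(\x,\y)\mapsto\y$, are bijective; since each is the restriction of a coordinate projection $\N^{2n}\to\N^n$ they are additive, so both are isomorphisms of semigroups. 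In fact $\pi_2\circ\pi_1^{-1}=\phi_2\circ\phi_1^{-1}$, so $\pi_1$ and $\pi_2$ are simply the two realizations of the single isomorphism $\E\cong S\cong S_1\cong S_2$.

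Next I would invoke the fact that the Hilbert basis is an invariant of the abstract semigroup. For a pointed, saturated, finitely generated semigroup the Hilbert basis coincides with the set of irreducible elements (those admitting no expression as a sum of two nonzero elements), and irreducibility is preserved by any semigroup isomorphism. Applying this to $\pi_1$ and $\pi_2$ gives at once $\pi_1(\H_{S})=\H_{S_1}$ and $\pi_2(\H_{S})=\H_{S_2}$; that is, projecting the Hilbert basis of $S$ onto its first $n$ coordinates recovers $\H_{S_1}$, while projecting onto its last $n$ coordinates recovers $\H_{S_2}$.

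It then remains only to translate the two projections back into the stated data. The first $n$ entries of an element of $\H_{S}$ are the multiplicities $(m_1,\dots,m_n)$ of a divisor $D=m_1E_1+\cdots+m_nE_n$, so $\phi_1^{-1}(\H_{S_1})=\H_{\E}$ is exactly the minimal generating set of the Lipman semigroup. The last $n$ entries form $\H_{S_2}$, which by Proposition~\ref{S2} supplies the monomial parametrization $\Gamma(S_2)$ of $V_{\E}$. Thus a single Hilbert-basis computation for $S$ yields both pieces of information simultaneously, and the point of the construction is that $S$ is cut out by the explicit linear system $A\cdot[v_1\cdots v_{2n}]^T=0$ with $A=[M(E)\,|\,I_n]$, so $\H_{S}$ can be read off directly from the intersection matrix by any standard algorithm. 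The only assertions needing genuine verification are that the projections are indeed semigroup isomorphisms and that the Hilbert basis is isomorphism-invariant; I expect the latter to be the most delicate point, though both are routine once Theorem~\ref{main} is in hand.
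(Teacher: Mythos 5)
Your proposal is correct and takes essentially the same route as the paper: both use Theorem \ref{main} to conclude that $\H_{S}$ corresponds componentwise to $\H_{S_1}$ and $\H_{S_2}$, and then translate back through Propositions \ref{S1} and \ref{S2} to get $\H_{\E}$ and the parametrization $\Gamma(S_2)$. The only difference is that you make explicit the justification the paper leaves implicit, namely that the coordinate projections $S\to S_1$ and $S\to S_2$ are semigroup isomorphisms and that Hilbert bases (being the sets of irreducible elements of pointed semigroups) are carried to Hilbert bases under such isomorphisms.
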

\begin{proof} By Theorem \ref{main}, it follows that the elements of $\H_{S}$ is in bijection with the elements of $\H_{S_1}$ and $\H_{S_2}$. Hence, $(m_1,\dots,m_n,d_1,\dots,d_n) \in \H_{S}$ if and only if $(m_1,\dots,m_n)\in \H_{S_1}$ and $(d_1,\dots,d_n) \in \H_{S_2}$. Now, it is clear from Proposition \ref{S1} that $(m_1,\dots,m_n)\in \H_{S_1}$ if and only if $m_1E_1+\cdots+m_nE_n \in \H_{\E}$. On the other hand, we know from the proof of Proposition \ref{S2} that $\H_{S_2}$ determines the parametrization of the toric variety associated to $\E$.
\end{proof}

\begin{rem} Our main Theorem \ref{main} gives rise to an algorithm which starts with the intersection matrix $M(E)$ and computes the Hilbert basis $\H_{\E}$ of the Lipman semigroup and the parametrization of the toric variety $V_\E$ at once. It uses existing algorithms for computing Hilbert basis of lattice points of cones, where the lattice is given by the kernel of an integral matrix $A$, see \cite{hemmecke} and references therein or \cite[Chapter 6]{kr}.
\end{rem}

We conclude the paper with an illustration of our user-friendly combinatorial method.

\begin{eg} Consider the exceptional divisor $E$ over a singularity of $A_2$-type. Then $A=\left[
        \begin{array}{cccc}
          -2 & \;\;\;1 &\;\;1&\;\;0 \\
          \;\;1 &-2 &\;\;0 &\;\;1\\
        \end{array}
      \right].$

A computation with a computer package (e.g. CoCoA \cite{cocoa} or 4ti2 \cite{4ti2}) gives the Hilbert basis of $S$ to be the set
      $$\H_S=\{(2,1,3,0),(1,1,1,1),(1,2,0,3)\}.$$
This says that $\H_\E=\{2E_1+E_2,E_1+E_2,E_1+2E_2\}$ and the smallest element $E_1+E_2$ is the fundamental cycle of $\E$. Since $\H_{S_2}=\{(3,0),(1,1),(0,3)\}$, it also says that the corresponding toric variety $V_\E$ is parametrized by the toric set $\Gamma(S_2)=\{(u_1^3,u_1u_2,u_2^3)\;|\;u_1,u_2 \in \C\}$.
\end{eg}

\section*{Acknowledgment}
The paper has been written while the author was visiting the Abdus Salam International Centre for Theoretical Physics (ICTP), Trieste, Italy. The author thanks the Department of Mathematics of ICTP and \c{C}ank\i r\i ~Karatek\.{i}n University for their support. He would like to thank M. Tosun for stimulating discussions and
her valuable comments on the article. He also thanks the referee for his/her careful reading.

\end{document}